%
%
%
%
\documentclass[12points]{amsart}
\usepackage{amsfonts}
\usepackage{amssymb}
\usepackage{graphicx}
\usepackage{amsmath}

\newtheorem{theorem}{Theorem}
\newtheorem{lemma}{Lemma}

\theoremstyle{definition}
\newtheorem{definition}{Definition}
\newtheorem{example}{Example}

\theoremstyle{remark}

\numberwithin{equation}{section}



\everymath{\displaystyle}

\begin{document}

\title{On the simplicity of  Lie algebra  of  Leavitt path algebra}

\author{Adel Alahmedi}
\address{Department of Mathematics, King Abdulaziz University, P.O.Box 80203, Jeddah, 21589, Saudi Arabia}
\curraddr{Department of Mathematics, King Abdulaziz University, P.O.Box 80203, Jeddah, 21589, Saudi Arabia}
\email{adelnife2@yahoo.com}

\author{Hamed Alsulami}
\address{Department of Mathematics, King Abdulaziz University, P.O.Box 80203, Jeddah, 21589, Saudi Arabia}
\email{hhaalsalmi@kau.edu.sa}




\keywords{Leavitt Path Algebra. Cuntz-Krieger C*-Algebras. Simple Lie Algebra.}

\begin{abstract}
For a field $F$ and a row-finite directed graph $\Gamma$ let  $L(\Gamma)$ be the Leavitt path algebra. We find necessary and sufficient conditions for the Lie algebra
  $[L(\Gamma),L(\Gamma)]$ to be simple.
\end{abstract}

\maketitle

\section{Introduction.}

In [3] G. Abrams and Z. Mesyan found necessary and sufficient conditions for a simple Leavitt path algebra $L(\Gamma)$ to give rise to a simple Lie algebra $[L(\Gamma),L(\Gamma)].$
This result is based on a simple easily checkable criterion for a linear combination of vertices $\sum_{i} \alpha_i v_i, \alpha_i\in F, v_i\in V,$ to lie in $[L(\Gamma),L(\Gamma)].$ In this paper
we extend the result of G.  Abrams and Z. Mesyan to not necessarily simple algebras and find    the necessary and sufficient conditions for a Lie algebra $[L(\Gamma),L(\Gamma)]$ to be simple.

\section{\protect\bigskip Definitions and Terminology}

A (directed) graph $\Gamma =(V,E,s,r)$ consists of two sets $V$ and $E$ that are respectively called vertices and edges, and two maps $s,$ $r:E\rightarrow V$.The vertices $s(e)$ and $r(e)$ are referred to as the source and the range of the edge $e$, respectively. The graph is called row-finite if for all vertices $v\in V$,$card(s^{-1}(v))<\infty .$ A vertex $v$ for which $s^{-1}(v)=\emptyset$ is called a sink. A vertex $v$ such that $r^{-1}(v)=\emptyset$ is called a source. A path $p=e_{1}.....e_{n}$ in a graph $\Gamma $ is a sequence of edges $e_{1}.....e_{n}$ such that $r(e_{i})=s(e_{i+1})$ for $i=1,...,n-1.$ In this case we say that the path $p$ starts at the vertex $s(e_{1})$ and ends at the vertex $r(e_{n}).$ If $s(e_{1})=$ $r(e_{n}),$ then the path is closed. If $p=e_{1}.....e_{n}$ is a closed path and the vertices $s(e_{1})$, $....,$ $s(e_{n})$ are distinct, then the subgraph $($ $s(e_{1}),...,$ $s(e_{n});e_{1},...,e_{n})$ of the graph $\Gamma $ is called a cycle. A cycle of length $1$ is called a loop.

\begin{definition}\label{def1}
Let $W$ be a subset of $V$. We say that
\begin{itemize}
  \item  $W$ is hereditary if $v \in W$ implies $w\in W$ for every vertex $w$ connects to $v.$
  \item$W$ is saturated if $\{r(e) : s(e) = v\}\subseteq W$ implies that $v\in W,$ for every non-sink vertex $v\in V.$
\end{itemize}
\end{definition}

\begin{definition}\label{def2}
We  call an edge $e\in E$ a \textit{fiber} if $s(e)$ is source, $r(e)$ is  sink and $E(V, r(e))=\{e\}.$
\end{definition}


\begin{definition}\label{def3}
We call a vertex $v$ in a connected graph $\Gamma(V,E)$ a \textit{balloon} over a nonempty subset $W$ of $V$ if (i) $v\notin W,$ (ii) there is a loop $C\in E(v,v),$ (iii) $E(v,W)\neq\emptyset,$ (iv) $E(v,V)=\{C\}\cup E(v,W),$ and (v) $E(V,v)=\{C\}.$
\end{definition}

Let $\Gamma $ be a row-finite graph and let $F$ be a field. The Leavitt path $F$-algebra $L(\Gamma )$ is the $F$-algebra presented by the set of generators $\{v | v\in V\},$ $\{e,$ $e^{\ast }|$ $e\in E\}$ and the set of relators (1) $v_{i}v_{j}=\delta _{v_{i},v_{j}}v_{i}$ for all $v_{i},v_{j}\in V;$ (2) $s(e)e=er(e)=e,$ $r(e)e^{\ast }=e^{\ast }s(e)=e^{\ast}$ for all $e\in E;$ (3) $e^{\ast }f=\delta _{e,f}r(e),$ for all $e,$ $f\in E;$ (4) $v=\sum_{s(e)=v}ee^{\ast }$, for an arbitrary vertex $v$ which is not a sink. The mapping which sends $v$ to $v$ for $v\in V,$ $e$ to $e^{\ast }$ and $e^{\ast }$ to $e$ for $e\in E,$ extends to an involution of the algebra $L(\Gamma ).$ If $p=e_{1}.....e_{n}$ is a path, then
$p^{\ast}=e_{n}^{\ast }....e_{1}^{\ast }.$ In what follows we consider only row-finite directed graphs. We call a graph $\Gamma$ simple if the Leavitt path algebra $L(\Gamma)$ is simple.
The conditions for a graph to be simple are given in [1].


Let $A$ be  an associative $F-$algebra. For elements $a,b\in A$, let $[a,b]=ab-ba$ be their the commutator. Then $A^{(-)}=(A, [,])$ is a Lie algebra.
If $A$ is an associative algebra and $S$ is a subset of $ A,$ we will denote the ideal of $A$ generated by $S$ as $id_{A}(S).$

\section{ Lie algebra of Leavitt path algebra  }

We start with theorem by G. Abrams and Z. Mesyan in [3].
\begin{theorem}([3])\label{th1}
Let $\Gamma(V,E)$ be a directed graph. Let $L(\Gamma)$ be a simple algebra.
\begin{itemize}
  \item[(i)] If $V$ is infinite then the Lie algebra $[L(\Gamma),L(\Gamma)]$ is simple;
  \item[(ii)]  If $V$ is finite, then $[L(\Gamma),L(\Gamma)]$ is simple if and only if $1_{L(\Gamma)}=\sum_{v\in V}v\notin [L(\Gamma),L(\Gamma)].$
\end{itemize}
\end{theorem}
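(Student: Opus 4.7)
The plan is to invoke Herstein's classical theorem on Lie ideals of simple associative rings: if $R$ is a simple associative ring with $\mathrm{char}\,R\neq 2$ that is not $4$-dimensional over its center, then every Lie ideal of $R$ either lies in $Z(R)$ or contains $[R,R]$, and consequently $[R,R]/([R,R]\cap Z(R))$ is a simple Lie algebra. Once Herstein applies to $L(\Gamma)$, the whole question reduces to computing the intersection $[L(\Gamma),L(\Gamma)]\cap Z(L(\Gamma))$.

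The first step is to compute the center. Using the canonical $\mathbb{Z}$-grading of $L(\Gamma)$ together with the standard description of its degree-$0$ component as a directed union of matrix algebras indexed by finite sets of paths, I expect to show: if $V$ is finite, then $L(\Gamma)$ is unital with $1_{L(\Gamma)}=\sum_{v\in V}v$ and $Z(L(\Gamma))=F\cdot 1_{L(\Gamma)}$; if $V$ is infinite, then $L(\Gamma)$ is non-unital and $Z(L(\Gamma))=0$. The simplicity hypothesis on $L(\Gamma)$ is what prevents any larger extension field from appearing as the center.

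Next, I would combine the center computation with Herstein. In case (i), $Z(L(\Gamma))=0$ forces $[L(\Gamma),L(\Gamma)]\cap Z(L(\Gamma))=0$, so Herstein immediately yields that $[L(\Gamma),L(\Gamma)]$ is itself a simple Lie algebra; it is nonzero because an infinite-vertex simple Leavitt path algebra is necessarily noncommutative. In case (ii), the intersection is either $0$ or all of $F\cdot 1_{L(\Gamma)}$. If it equals $0$, Herstein again gives simplicity. If it equals $F\cdot 1_{L(\Gamma)}$, then $F\cdot 1_{L(\Gamma)}$ is a proper nonzero abelian Lie ideal of $[L(\Gamma),L(\Gamma)]$, destroying simplicity. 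Hence $[L(\Gamma),L(\Gamma)]$ is simple if and only if $1_{L(\Gamma)}\notin[L(\Gamma),L(\Gamma)]$.

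The main obstacles I anticipate are two. First, Herstein's hypotheses must be verified for simple Leavitt path algebras; the exceptional cases (characteristic $2$, or $R$ small-dimensional over its center) have to be handled by hand, since the only simple Leavitt path algebras that are finite-dimensional over $F$ are the matrix algebras $M_n(F)$, where the behaviour of the trace-zero subspace must be examined directly. Second, the center computation in the non-unital case requires real care: one must use simplicity together with the hereditary/saturated structure of $\Gamma$ to rule out any nonzero finite linear combination of paths from being central. Once these technical points are settled, the reduction via Herstein is essentially automatic, and the criterion for $\sum\alpha_i v_i\in[L(\Gamma),L(\Gamma)]$ alluded to in the introduction is exactly what makes the abstract condition $1_{L(\Gamma)}\notin[L(\Gamma),L(\Gamma)]$ combinatorially checkable.
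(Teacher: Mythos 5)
The paper states this theorem without proof, simply citing Abrams--Mesyan [3], and your outline --- reduce everything to Herstein's theorem on Lie ideals of simple associative rings together with the computation of the center ($Z(L(\Gamma))=F\cdot 1$ for $V$ finite, $Z(L(\Gamma))=0$ for $V$ infinite, as in Aranda Pino--Crow [2]) --- is essentially the route taken in [3] and echoed in this paper's own treatment of its new theorem (Lemmas~\ref{le11}--\ref{le12} plus the appeal to Herstein [7],[8]). One caution: in Herstein's classical statement the characteristic-$2$ exclusion is not confined to algebras finite-dimensional over the center, so to make your plan airtight you must invoke the refined version in which the only exceptional case is $\operatorname{char}=2$ \emph{and} $\dim_{Z}R=4$ (i.e.\ $M_2(F)$, handled directly since there $1\in[R,R]=sl_2$ and $sl_2$ is indeed non-simple in characteristic $2$), which is the form of the result that [3] actually uses.
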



There exist however non-simple Leavitt path algebras having the Lie algebra $[L(\Gamma),L(\Gamma)]$ simple.

\begin{example}
Let $\Gamma=$\includegraphics[width=.15\textwidth]{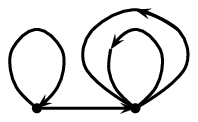}. The Lie algebra $[L(\Gamma),L(\Gamma)]$ is isomorphic to the
Lie algebra of infinite finitary matrices over the Leavitt algebra $L(2)$ and therefore is simple.
\end{example}

The following theorem gives a classification of directed graph having $[L(\Gamma),L(\Gamma)]$ simple.

\begin{theorem}\label{th2}
Let $\Gamma(V,E)$ be a directed row-finite graph. The Lie algebra $[L(\Gamma),L(\Gamma)]$ is simple if and only if either $L(\Gamma)$ is
simple- this case is covered by Theorem\ref{th1} - or $\Gamma$ contains a simple subgraph $W$ such that every point $v \in V\setminus W$ is
a balloon over $W,$ and $\sum_{w\in r(E(v,W))} w\in [L(W),L(W)].$
\end{theorem}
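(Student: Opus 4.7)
The plan is to prove the two directions separately, with sufficiency chiefly computational and necessity chiefly structural. For sufficiency, assume the balloon configuration over a simple subgraph $W$. First I verify $W$ is hereditary saturated: hereditary because each balloon $v$ has outgoing edges only to $\{v\}\cup W$, so descendants of $W$ remain in $W$; and saturated because every non-sink vertex outside $W$ has its own loop as an outgoing edge, so its outgoing range set is not contained in $W$. Hence $I := \mathrm{id}_{L(\Gamma)}(W)$ is an ideal, and $L(\Gamma)/I$ is isomorphic to a direct sum of Laurent polynomial algebras $F[C_v, C_v^{-1}]$, one per balloon. This quotient is commutative, so $[L(\Gamma), L(\Gamma)] \subseteq I$.

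Next I unpack $[L(\Gamma), L(\Gamma)]$ using the Leavitt relations. For each balloon $v$ with loop $C_v$ and edges $e_{v,i}$ into $W$, combining $v = C_vC_v^{*} + \sum_i e_{v,i} e_{v,i}^{*}$ with $C_v^{*}C_v = v$ gives $\sum_i e_{v,i}e_{v,i}^{*} = -[C_v, C_v^{*}]$; together with $[e_{v,i}, e_{v,i}^{*}] = e_{v,i}e_{v,i}^{*} - r(e_{v,i})$ this shows $\sum_{w \in r(E(v,W))} w$ lies in $[L(\Gamma), L(\Gamma)]$ automatically. To prove simplicity, let $J \neq 0$ be a Lie ideal of $[L(\Gamma), L(\Gamma)]$. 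Bracketing elements of $J$ against edges, duals, and vertices funnels $J$ into the $W$-part, yielding $J \cap [L(W), L(W)] \neq 0$. Theorem \ref{th1} applied to $W$ then gives $[L(W), L(W)] \subseteq J$. The hypothesis $\sum_{w \in r(E(v,W))} w \in [L(W), L(W)]$ now lets us recover $[C_v, C_v^{*}]$ inside $J$, and bracketing with $C_v$, $C_v^{*}$, and the $e_{v,i}$ produces every remaining balloon commutator, so $J = [L(\Gamma), L(\Gamma)]$.

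For necessity, suppose $[L(\Gamma), L(\Gamma)]$ is simple but $L(\Gamma)$ is not. Then a nontrivial hereditary saturated subset exists, providing a proper ideal $I$. The subspace $I \cap [L(\Gamma), L(\Gamma)]$ is a Lie ideal of $[L(\Gamma), L(\Gamma)]$, and a short commutator calculation shows it is nonzero; by simplicity it equals all of $[L(\Gamma), L(\Gamma)]$, so $[L(\Gamma), L(\Gamma)] \subseteq I$. Take $W \subseteq V$ with $I = \mathrm{id}_{L(\Gamma)}(W)$ chosen so that $V \setminus W$ is maximal with respect to this containment. I would then argue that $L(W)$ is itself simple (else iteration produces a proper Lie ideal sitting inside $[L(W), L(W)] \subseteq [L(\Gamma), L(\Gamma)]$), that each $v \in V \setminus W$ is a balloon, and that $\sum_{w \in r(E(v,W))} w \in [L(W), L(W)]$, for otherwise this sum represents a nonzero class modulo $[L(W), L(W)]$ that generates a proper Lie ideal.

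The main obstacle is the balloon classification: forcing every $v \in V \setminus W$ to satisfy all five defining conditions. I would systematically exhibit, for each forbidden alternative (an edge between two non-$W$ vertices, a cycle of length greater than one, a vertex with no edge into $W$, multiple loops at $v$, or an external source entering $v$), an explicit proper Lie ideal of $[L(\Gamma), L(\Gamma)]$ contradicting simplicity. Building these ideals requires delicate tracking of paths, cycles, edges, and their duals in $L(\Gamma)$, exploiting the $\mathbb{Z}$-grading and the fact that $[L(\Gamma), L(\Gamma)] \subseteq I$ already forces strong structural restrictions on commutators that reach outside $W$.
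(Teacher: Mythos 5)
Your sufficiency argument has a genuine gap at its central step. You propose to show that a nonzero Lie ideal $J$ meets $[L(W),L(W)]$ nontrivially and then to invoke Theorem~\ref{th1} ``applied to $W$'' to conclude $[L(W),L(W)]\subseteq J$. But Theorem~\ref{th1} gives simplicity of $[L(W),L(W)]$ only under the \emph{additional} hypothesis that $\sum_{w\in W}w\notin [L(W),L(W)]$ when $W$ is finite, and that hypothesis is not part of Theorem~\ref{th2}. Indeed it typically fails in exactly the situations the theorem is designed to cover: in the paper's own Example, $W$ is a single vertex $u$ with two loops, $L(W)\cong L(2)$, and $u=-[e_1,e_1^*]-[e_2,e_2^*]\in[L(W),L(W)]$, so $[L(W),L(W)]$ has nonzero center and is \emph{not} a simple Lie algebra --- yet $[L(\Gamma),L(\Gamma)]$ is simple. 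So the implication ``$J\cap[L(W),L(W)]\neq 0\Rightarrow [L(W),L(W)]\subseteq J$'' is unavailable, and your chain of reductions breaks. The paper avoids this by never working with $[L(W),L(W)]$ directly: it proves $I=id_{L(\Gamma)}(W)$ is a \emph{simple associative} algebra (Lemma~\ref{le9}), shows $[L(\Gamma),L(\Gamma)]=[I,I]$ (Lemma~\ref{le11}, where the hypothesis $\sum_{w\in r(E(v,W))}w\in[L(W),L(W)]$ enters in the case $[c^n,(c^*)^n]$), shows $[I,I]$ has zero center (Lemma~\ref{le12}), and then quotes Herstein's theorem that $[A,A]$ modulo its center is simple for a simple ring $A$. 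Passing from $L(W)$ to the larger simple ideal $I$ is precisely what kills the troublesome central element; your proposal has no substitute for this step.

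The necessity direction is closer to the paper in outline (choose $W$ with $I=id_{L(\Gamma)}(W)\supseteq[L(\Gamma),L(\Gamma)]$ and $V\setminus W$ as large as possible, i.e.\ $W$ a minimal hereditary saturated set), but the two hardest points are left as assertions. First, the existence of such a minimal $W$ requires an argument (the paper's Lemmas~\ref{le3} and~\ref{le4}: every nonzero graded ideal has nonzero commutator, hence contains $[L(\Gamma),L(\Gamma)]$, so the intersection of all nonzero graded ideals is nonzero). Second, your plan for the balloon classification --- an explicit proper Lie ideal for each forbidden configuration --- is much heavier than needed: since $[L(\Gamma),L(\Gamma)]\subseteq I$ forces $[L(\Gamma'),L(\Gamma')]=0$ for $\Gamma'=\Gamma\setminus W$, Lemma~\ref{le1} immediately says $\Gamma'$ is a disjoint union of isolated vertices and loops, and only the isolated vertices need to be excluded by hand. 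Finally, the claim that $\sum_{w\in r(E(v,W))}w\notin[L(W),L(W)]$ would ``generate a proper Lie ideal'' is not justified; the paper's proof of this point (Lemmas~\ref{le7} and~\ref{le8}) requires the decomposition $L(\Gamma)=S_0\oplus S_1$ coming from the Gr\"obner--Shirshov basis to separate the $[p,p^*]$ contributions of paths that start outside $W$ from those lying in $W$, and nothing in your sketch replaces that computation.
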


 We will prove the theorem by proving a series of lemmas. The first lemma is due to  G.  Abrams and Z. Mesyan, [3].
 We will state it without proof.

\begin{lemma}([3])\label{le1}
Let $\Gamma(V,E)$ be a directed graph. Then $[L(\Gamma),L(\Gamma)]=(0)$ if and only if $\Gamma$ is a disjoint union of  \includegraphics[width=.04\textwidth]{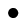},\includegraphics[width=.05\textwidth]{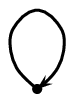}( vertices and loops).
\end{lemma}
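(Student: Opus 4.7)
The plan is to prove both directions directly from the defining relations of $L(\Gamma)$, with the forward direction handled by a short case analysis on a single edge.

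For the easy direction ($\Leftarrow$), I would note that if $\Gamma$ is a disjoint union of isolated vertices and isolated loops, then the Leavitt path algebra decomposes as a direct sum of copies of $F$ (one per isolated vertex) and copies of $F[x,x^{-1}]$ (one per loop component, since at a vertex $v$ with a single loop $e$ we have $ee^*=v=e^*e$, making $e$ invertible with inverse $e^*$). Such an algebra is commutative, so $[L(\Gamma),L(\Gamma)]=(0)$.

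For the forward direction ($\Rightarrow$), assume $L(\Gamma)$ is commutative. The strategy is to rule out, one at a time, every feature a graph could have beyond isolated vertices and isolated loops. First, suppose there is an edge $e$ with $s(e)=u\neq v=r(e)$. Using $s(e)e=e$ and $er(e)=e$, we compute $ue=e$ and $eu=e r(e)u=ev u=0$, so $[u,e]=e\neq 0$, a contradiction. Hence every edge is a loop. Second, suppose a vertex $v$ carries two distinct loops $e_{1}\neq e_{2}$. Then $[e_{1},e_{2}]=e_{1}e_{2}-e_{2}e_{1}$, and since $e_{1}e_{2}$ and $e_{2}e_{1}$ are distinct monomials in the standard normal form for $L(\Gamma)$ (no Leavitt relation identifies two forward paths of the same length), they are linearly independent, so $[e_{1},e_{2}]\neq 0$, again a contradiction. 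Therefore each vertex has at most one loop. Finally, combining these two facts, every edge is a loop at its (unique) vertex, and each vertex has either zero or one incident edge; this is exactly the statement that $\Gamma$ is a disjoint union of isolated vertices and isolated loops.

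The main obstacle in this scheme is the linear-independence claim used to conclude $[e_{1},e_{2}]\neq 0$ when two loops share a vertex. This is not purely formal: one must appeal to the existence of a normal form for elements of $L(\Gamma)$ (monomials $p q^{*}$ where $p,q$ are paths ending at a common vertex), which in turn relies on a standard construction—either via a concrete faithful representation on a path space, or via the Bergman diamond lemma applied to the defining relations. Once that normal form is accepted, the rest of the argument is the short case analysis above; without it, one risks circularity, because the hypothesis $[L(\Gamma),L(\Gamma)]=0$ is a statement about elements of $L(\Gamma)$ and cannot be tested without a handle on when elements are nonzero.
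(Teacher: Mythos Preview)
The paper does not actually prove this lemma: immediately after stating it, the authors write ``We will state it without proof'' and attribute the result to Abrams and Mesyan [3]. So there is no proof in the paper to compare your proposal against.

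That said, your argument is correct and is essentially the standard one. Two small remarks. First, in the two-loops case you can avoid invoking linear independence of the forward paths $e_1e_2$ and $e_2e_1$ by instead computing $[e_1^{*},e_2]=e_1^{*}e_2-e_2e_1^{*}$: relation (3) gives $e_1^{*}e_2=0$ directly, while $e_2e_1^{*}$ is a single basis element $pq^{*}$ with $p=e_2$, $q=e_1$, $r(p)=r(q)=v$, so it is nonzero by the normal form. This uses the same normal-form input but in a slightly cleaner way. Second, your worry about the normal form is well placed but easily discharged in this paper's context: the authors themselves rely on exactly this basis (the $pq^{*}$ description, via the Groebner--Shirshov basis of [4]) in the paragraph preceding Lemma~\ref{le7}, so appealing to it here is entirely consistent with the paper's framework.
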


\begin{lemma}\label{le2}
Let $\Gamma(V,E)$ be a row-finite graph. If the Lie algebra $[L(\Gamma),L(\Gamma)]$ is nonzero simple, then every cycle  has an exit.
\end{lemma}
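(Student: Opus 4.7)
\medskip

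\noindent\textbf{Proof plan.} The plan is to argue by contradiction: suppose $\Gamma$ contains a cycle $C=e_1\cdots e_n$ without exit, with vertices $v_1,\dots,v_n$ (indices mod $n$) and $s(e_i)=v_i$, $r(e_i)=v_{i+1}$. Set $c=e_1\cdots e_n\in v_1L(\Gamma)v_1$. Since $C$ has no exit, $v_i$ emits only $e_i$, so relation (4) collapses to $v_i=e_ie_i^*$, and consequently $cc^*=c^*c=v_1$. A standard consequence of the no-exit hypothesis is that the corner $v_1L(\Gamma)v_1$ equals $F[c,c^*]$ and is isomorphic, via $c\mapsto x$, to the Laurent polynomial ring $F[x,x^{-1}]$.

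The core strategy is to exhibit a proper, nonzero Lie ideal of $[L(\Gamma),L(\Gamma)]$, contradicting simplicity. The key (purely formal) fact I will use is that for any two-sided associative ideal $K$ of $L(\Gamma)$, the intersection $K\cap[L(\Gamma),L(\Gamma)]$ is automatically a Lie ideal of $[L(\Gamma),L(\Gamma)]$, because $[L(\Gamma),K]\subseteq K$ and $[L(\Gamma),[L,L]]\subseteq[L,L]$. The candidate I propose is $K:=L(\Gamma)\,(c-v_1)\,L(\Gamma)$, the two-sided ideal generated by $c-v_1$. It remains to check that this $K\cap[L(\Gamma),L(\Gamma)]$ is both nonzero and proper.

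For properness of $K$ itself, I will compute the corner $v_1Kv_1$. Because $c-v_1\in v_1L(\Gamma)v_1$, one obtains $v_1Kv_1=v_1L(\Gamma)v_1\cdot(c-v_1)\cdot v_1L(\Gamma)v_1=(c-v_1)F[c,c^*]$, which under the isomorphism $v_1L(\Gamma)v_1\cong F[x,x^{-1}]$ corresponds to the proper principal ideal $(x-1)$. Hence $v_1\notin K$, so $K\subsetneq L(\Gamma)$. For nonvanishing of $K\cap[L(\Gamma),L(\Gamma)]$, I compute the commutator $[c-v_1,e_1^*]$: using $v_1e_1^*=0$ (since $e_1^*\in v_2L v_1$ and $v_1v_2=0$ when $n\geq 2$), $e_1^*v_1=e_1^*$, $e_1^*e_1=v_2$ and $ce_1^*=0$, one gets $[c-v_1,e_1^*]=e_1^*-e_2e_3\cdots e_n$, which is a nonzero element lying in both $K$ and $[L(\Gamma),L(\Gamma)]$. (For $n=1$ the analogous computation uses an edge $f$ with $r(f)=v_1$, $s(f)\neq v_1$, which must exist because otherwise the connected component of $v_1$ would be an isolated loop contributing $0$ to $[L(\Gamma),L(\Gamma)]$; this requires combining with Lemma \ref{le1}.)

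The main obstacle, and the heart of the argument, is the third step: exhibiting a commutator in $[L(\Gamma),L(\Gamma)]$ that does \emph{not} lie in $K$, so that $K\cap[L(\Gamma),L(\Gamma)]$ is a proper Lie ideal. I plan to produce such a commutator by constructing a quotient $\varphi\colon L(\Gamma)\to A$ with $\varphi(c-v_1)=0$ but $\varphi([L(\Gamma),L(\Gamma)])\neq 0$. The natural choice is to pass to $L(\Gamma)/K$ itself and argue directly, using the $\mathbb Z$-grading of $L(\Gamma)$ (degree of $e$ is $+1$, of $e^*$ is $-1$) together with the observation that $c-v_1$ is inhomogeneous, so the quotient retains enough of the graph to carry nonzero commutators — concretely, any commutator such as $[e_1,e_2^*]$ involving edges not all on the cycle, or $[e_1,e_1^*]=v_1-v_2$, can be tracked through $L(\Gamma)/K$ via the corner analysis above (since $v_1\notin K$ and the corners at distinct cycle vertices remain nonisomorphic). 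The technical heart is verifying that this particular commutator survives the quotient; once established, the contradiction with the simplicity of $[L(\Gamma),L(\Gamma)]$ is immediate.
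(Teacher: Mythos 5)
Your strategy is genuinely different from the paper's. The paper also exploits the no-exit corner being $M_d(F[t,t^{-1}])$, but it builds a \emph{descending chain} of ideals $I_n=id_{L(\Gamma)}\bigl(M_d((1-t)^nF[t,t^{-1}])\bigr)$, uses simplicity to force $[L(\Gamma),L(\Gamma)]\subseteq\bigcap_n I_n$, and derives a contradiction from $\bigcap_n J_n=(0)$ against an explicit nonzero commutator in the corner. You instead take the single ideal $K=id_{L(\Gamma)}(c-v_1)$ and aim to show $K\cap[L(\Gamma),L(\Gamma)]$ is a nonzero proper Lie ideal. Your first two steps are correct and well executed: $v_1Kv_1=(c-v_1)F[c,c^*]$ corresponds to the proper ideal $(x-1)F[x,x^{-1}]$, so $v_1\notin K$; and $[c-v_1,e_1^*]=e_1^*-e_2\cdots e_n$ (resp.\ $[c-v,f]=f-fc$ for a loop) is a nonzero element of $K\cap[L(\Gamma),L(\Gamma)]$. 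If completed, your argument is arguably more direct than the paper's, since it needs only one ideal rather than an intersection argument.

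The one genuine gap is the step you yourself flag: showing $[L(\Gamma),L(\Gamma)]\not\subseteq K$. Your proposed route through $L(\Gamma)/K$ and the $\mathbb Z$-grading is vague (note that $c-v_1$ is inhomogeneous, so $K$ is not a graded ideal, and ``the corners at distinct cycle vertices remain nonisomorphic'' is not an argument), but the fact you need follows in one line from the corner computation you already have. For $n\geq 2$: $[e_1,e_1^*]=e_1e_1^*-e_1^*e_1=v_1-v_2$, and if $v_1-v_2\in K$ then $v_1=v_1(v_1-v_2)v_1\in v_1Kv_1\cong(x-1)F[x,x^{-1}]$, which contains no identity --- contradiction; so $[e_1,e_1^*]\notin K$. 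Be aware, however, that both witnesses you name fail when $n=1$: there is no $v_2$, and $[c,c^*]=cc^*-c^*c=v-v=0$. In that case you must reuse the external edge $f$ (with $r(f)=v$, $s(f)\neq v$) not only for nonvanishing but also for properness: $[f,f^*]=ff^*-v$ satisfies $v(ff^*-v)v=-v$, so $[f,f^*]\notin K$ by the same corner argument. With these two lines added (and the same implicit appeal the paper makes to the loop not being an isolated component), your proof is complete and stands as a valid alternative to the paper's.
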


\begin{proof}
Let $C$ be a no exist cycle of $\Gamma$ of length $d.$ Then $L(C)\cong M_d(F[t,t^{-1}]).$ Let $a$ be the sum of all vertices on the cycle $C.$ The element
$a$ is the identity of $L(C)$ and $L(C)=aL(\Gamma)a.$
Consider the ideal $J_n=(1-t)^n F[t,t^{-1}]$ of $F[t,t^{-1}].$ Now, if $d\geq 2,$ then $[M_d(J_n),M_d(J_n)]\neq (0)$ for all $n\geq 1,$ see [5].
Let $I_n=id_{L(\Gamma)}(M_d(J_n)).$ Then $[I_n,I_n]\triangleleft  [L(\Gamma),L(\Gamma)]$ and because of simplicity of $[L(\Gamma),L(\Gamma)]$
we have $[L(\Gamma),L(\Gamma)]=[I_n,I_n]\subseteq I_n.$ Hence $[L(\Gamma),L(\Gamma)]\cap L(C)\subseteq I_n\cap L(C)=M_d(J_n).$ Since $\cap_n J_n=(0),$
it follows that $[L(\Gamma),L(\Gamma)]\cap L(C)=(0),$ but $(0)\neq [M_d(F[t,t^{-1}]), M_d(F[t,t^{-1}])]\subseteq [L(\Gamma),L(\Gamma)]\cap L(C).$
A contradiction. Hence $d=1.$ Thus $C$ is a loop. Since $C$ has no exit and can not be isolated there exist an edge $e\in E,$ such that $s(e)\notin V(C)=\{v\}.$
Let $J_n=(v-C)^n L(C),$ $I_n=id_{L(\Gamma)}(J_n),$ $vI_nv\subseteq J_n. $ Now, $[eJ_n,J_n]=eJ_n\neq (0).$ Hence $[I_n,I_n]\neq (0),$
$[L(\Gamma),L(\Gamma)]=[I_n,I_n]\subseteq I_n$ and therefore $v[L(\Gamma),L(\Gamma)]v\subseteq J_n.$ Since $\cap_n J_n=(0)$ it follows that
$v[L(\Gamma),L(\Gamma)]v=(0),$ but $[e^*,e]=v-ee^*,$ and $v[e^*,e]v=v\neq 0.$ A contradiction.
\end{proof}

The algebra $L(\Gamma)$ is graded: $dg(v)=0,$ $dg(e)=1,$ $dg(e^*)=-1$ for all $v\in V, e\in E.$ In [9] it is shown that every graded ideal $I$ of $L(\Gamma)$
is generated (as an ideal ) by $I\cap V.$ Thus there is a one-to-one correspondence between graded ideals and hereditary saturated subsets of $V.$

\begin{lemma}\label{le3}
Let $\Gamma(V,E)$ be a row-finite graph. Let $W$ be nonempty hereditary and saturated  subset of $V.$
Let $I=id_{L(\Gamma)}(W).$  If $[L(\Gamma),L(\Gamma)]$ is nonzero simple, then $[I,I]\neq (0).$
\end{lemma}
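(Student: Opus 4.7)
The plan is to argue by contradiction. Suppose $[I,I] = (0)$, so that $I$ is a commutative subalgebra of $L(\Gamma)$; I will exhibit a nonzero commutator of two elements of $I$, the candidate being $[e^*,e]$ for a suitably chosen edge $e$ with $r(e) \in W$.

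The main step is to locate such an edge. Pick any $v \in W$. If some vertex of $W$ is the range of an edge of $\Gamma$, fix one such edge $e$. Otherwise every vertex of $W$ is a source; combined with the heredity of $W$ this forces every vertex of $W$ to be a sink as well (any outgoing edge from $W$ would land in $W$, making its range a non-source), hence isolated in $\Gamma$. This degenerate situation, in which $I$ sits inside the center of $L(\Gamma)$ and $[I,I]$ is trivially zero, is incompatible with simple nonzero $[L(\Gamma),L(\Gamma)]$ for a nondegenerate graph and must be dismissed as a side remark.

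Assume now that $e$ has been found, with $v := r(e) \in W$. Since $v \in W \subseteq I$ and $I$ is a two-sided ideal, both $e = e\cdot v$ and $e^* = v\cdot e^*$ lie in $I$. The hypothesis $[I,I] = (0)$ then gives
\[
0 \;=\; [e^*,e] \;=\; e^*e - ee^* \;=\; r(e) - ee^* \;=\; v - ee^*,
\]
so $ee^* = v$. Comparing supports ($ee^* \in s(e)L(\Gamma)s(e)$ while $v \in vL(\Gamma)v$) forces $s(e) = v$, making $e$ a loop at $v$; the Cuntz-Krieger relation $v = \sum_{s(f)=v} ff^*$ together with $ee^* = v$ then forces $s^{-1}(v) = \{e\}$, so $e$ is a cycle of length one with no exit. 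This contradicts Lemma \ref{le2}. The principal obstacle is the edge-finding step; once an edge ending in $W$ is available, the Cuntz-Krieger relations and Lemma \ref{le2} deliver the contradiction almost automatically.
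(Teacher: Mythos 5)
Your argument is correct in substance but follows a genuinely different route from the paper's. The paper deduces from $[I,I]=(0)$ that $[L(W),L(W)]=(0)$, invokes Lemma~\ref{le1} to conclude that the subgraph induced on $W$ is a disjoint union of points and loops, and from this extracts an edge $e$ with $r(e)\in W$ and $s(e)\notin W$; the commutator $[e,e^*]=ee^*-r(e)$ is then visibly nonzero because $ee^*$ lives in the corner $s(e)L(\Gamma)s(e)$ while $r(e)$ lives in a different corner. You bypass Lemma~\ref{le1} entirely: you take \emph{any} edge $e$ ending in $W$ (so possibly a loop inside $W$), and in the case $[e^*,e]=0$ you correctly force $ee^*=r(e)=s(e)$ and $s^{-1}(r(e))=\{e\}$, i.e.\ a no-exit loop, which Lemma~\ref{le2} forbids. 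Your version needs a weaker input (one edge into $W$ rather than, for each $w\in W$, an edge arriving from outside $W$) at the price of calling on Lemma~\ref{le2}; the paper's version gets the nonvanishing of the commutator by a pure support argument but needs the structural description of $W$. One shared caveat: your dismissal of the ``degenerate'' case (no edge ends in $W$, so every vertex of $W$ is isolated) is not actually a proof, and indeed if $\Gamma$ is allowed to be disconnected this case genuinely occurs and the lemma fails there (take $\Gamma$ a graph with simple nonzero $[L(\Gamma),L(\Gamma)]$ plus one isolated vertex $w$, and $W=\{w\}$, $I=Fw$). The paper's own proof waves this away identically with ``otherwise $w$ is isolated in $\Gamma$,'' so an implicit connectedness hypothesis is being used in both arguments; you are no worse off than the source, but you should state the hypothesis rather than call the exclusion a side remark.
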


\begin{proof}
If $[I,I]=(0),$ then, in particular, $[L(W),L(W)]=(0),$ $W$ is a disjoint union of  \includegraphics[width=.04\textwidth]{point}, and \includegraphics[width=.05\textwidth]{loop}.
This implies that for every vertex $w\in W$ there exist an edge $e\in E$ such that $r(e)=w,$ $s(e)\notin W$ otherwise $w$ is isolated in $\Gamma.$
Now, $e,e^*\in I$ and $[e,e^*]\neq 0.$ Lemma is proved.
\end{proof}

\begin{lemma}\label{le4}
Let $\Gamma(V,E)$ be a a row-finite graph. If $[L(\Gamma),L(\Gamma)]$ is nonzero simple, then there exists a minimal hereditary saturated subset in $V.$
\end{lemma}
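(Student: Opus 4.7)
The plan is to apply Zorn's lemma to the collection $\mathcal{F}$ of nonempty hereditary saturated subsets of $V$, partially ordered by inclusion. Since $[L(\Gamma),L(\Gamma)] \neq 0$ the vertex set $V$ is itself nonempty, and $V$ is trivially hereditary and saturated, so $\mathcal{F}$ is nonempty. To produce a minimal element we pass to the opposite order and verify that every chain $\{W_\alpha\}_{\alpha\in A}$ in $\mathcal{F}$ has a lower bound under inclusion, namely a member of $\mathcal{F}$ contained in every $W_\alpha$.

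The natural candidate is the intersection $W := \bigcap_\alpha W_\alpha$. That $W$ is hereditary and saturated is an immediate check directly from Definition~\ref{def1}: if $v\in W$ and some $w$ connects to $v$, then $w\in W_\alpha$ for each $\alpha$; if $v$ is a non-sink with $\{r(e):s(e)=v\}\subseteq W$, then the same inclusion holds in each $W_\alpha$ and so $v\in W_\alpha$ for every $\alpha$. The substance of the lemma is therefore to show that $W$ is \emph{nonempty}.

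For nonemptiness I exploit the ideal-theoretic consequence of simplicity. For each $\alpha$, set $I_\alpha:=\mathrm{id}_{L(\Gamma)}(W_\alpha)$. By Lemma~\ref{le3}, $[I_\alpha,I_\alpha]\neq(0)$, and since this is a Lie ideal of the simple Lie algebra $[L(\Gamma),L(\Gamma)]$, the simplicity forces $[L(\Gamma),L(\Gamma)]=[I_\alpha,I_\alpha]\subseteq I_\alpha$. Intersecting over $\alpha$ gives $[L(\Gamma),L(\Gamma)]\subseteq \bigcap_\alpha I_\alpha$. Now $\bigcap_\alpha I_\alpha$ is a graded ideal (the intersection of graded ideals is graded), and its vertex part equals $\bigcap_\alpha(I_\alpha\cap V)=\bigcap_\alpha W_\alpha=W$, using the standard fact (recalled in the paragraph preceding Lemma~\ref{le3}) that $I_\alpha\cap V=W_\alpha$ and that every graded ideal is generated by its vertex set. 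If $W$ were empty, that graded ideal would be $(0)$, contradicting $[L(\Gamma),L(\Gamma)]\neq(0)$. Hence $W\in\mathcal{F}$, Zorn's lemma applies, and $\mathcal{F}$ has a minimal element.

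The main obstacle is really the nonemptiness step; once one notices that simplicity of the Lie algebra turns every graded ideal associated to a member of $\mathcal{F}$ into an ideal containing $[L(\Gamma),L(\Gamma)]$, the correspondence between graded ideals and hereditary saturated subsets of $V$ does the rest of the work.
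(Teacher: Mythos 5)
Your proof is correct and rests on exactly the same key mechanism as the paper's: Lemma~\ref{le3} together with simplicity forces $[L(\Gamma),L(\Gamma)]$ into every nonzero graded ideal, so intersections of such ideals remain nonzero, and the correspondence between graded ideals and hereditary saturated subsets finishes the job. The paper packages this more directly---it intersects \emph{all} nonzero graded ideals at once, obtaining a minimum (not merely a Zorn-minimal) hereditary saturated subset---but your chain-by-chain Zorn argument is a valid, if slightly more roundabout, rendering of the same idea.
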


\begin{proof}
We need to show that the intersection of all nonzero graded ideals in $L(\Gamma)$ is nonzero. If $I$ is a nonzero graded ideal of $L(\Gamma)$ then by Lemma~\ref{le3}
$[I,I]\neq  (0).$ Since $[L(\Gamma),L(\Gamma)]$ is simple, then $[L(\Gamma),L(\Gamma)]=[I,I]$  and therefore $[L(\Gamma),L(\Gamma)]$ lies in the intersection of all
nonzero graded ideals of $L(\Gamma).$
\end{proof}

Let $\Gamma(V,E)$ be a a row-finite graph. Suppose $[L(\Gamma),L(\Gamma)]$ is nonzero simple. Let $W$ be a minimal hereditary saturated subset in $V.$
Let $I=id_{L(\Gamma)}(W),$ $\Gamma'=(V\setminus W, E\setminus E(V, W)).$ We assume that $W\neq V,$ that is $L(\Gamma)$ is not simple.
Since $L(\Gamma')\cong L(\Gamma)/I$ and $[L(\Gamma),L(\Gamma)]\subseteq I$  it follows that $[L(\Gamma'),L(\Gamma')]=(0).$
By Lemma~\ref{le1} $\Gamma'$ is a disjoint union of \includegraphics[width=.04\textwidth]{point}, and \includegraphics[width=.05\textwidth]{loop}.

\begin{lemma}\label{le5}
$\Gamma'$ does not have components  \includegraphics[width=.04\textwidth]{point}.
\end{lemma}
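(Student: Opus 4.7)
The plan is to argue by contradiction: I suppose there is a vertex $v\in V\setminus W$ forming an isolated point component of $\Gamma'$ and derive a contradiction by showing that $v$ must in fact be isolated in all of $\Gamma$, after which $\{v\}$ itself becomes a nonempty hereditary saturated subset which violates Lemma~\ref{le3}.

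First I would translate ``isolated in $\Gamma'$'' into information about edges of $\Gamma$. Recall $\Gamma'=(V\setminus W,\, E\setminus E(V,W))$, so an edge $e$ with $s(e)=v$ is dropped from $\Gamma'$ precisely when $r(e)\in W$. If $v$ emits any edges in $\Gamma$, then all of their ranges must lie in $W$; but then saturation of $W$ forces $v\in W$, contradicting $v\in V\setminus W$. Hence $v$ is a sink in $\Gamma$. Dually, any edge $e$ of $\Gamma$ with $r(e)=v$ automatically satisfies $r(e)\notin W$ and so survives in $\Gamma'$; the absence of such edges in $\Gamma'$ therefore forces $v$ to be a source in $\Gamma$ as well. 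Together, $v$ has no incident edges at all in $\Gamma$.

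Next I would observe that $\{v\}$ is then a nonempty hereditary saturated subset of $V$ (both conditions being vacuous for a vertex with no incident edges). Applying Lemma~\ref{le3} to $\{v\}$ in place of $W$ yields $[J,J]\neq(0)$, where $J=id_{L(\Gamma)}(\{v\})$. The contradiction then comes from computing $J$ directly from the defining relations of $L(\Gamma)$: because no edge meets $v$, one has $ve=ev=ve^{\ast}=e^{\ast}v=0$ for every $e\in E$, and $uv=vu=\delta_{u,v}v$ for every $u\in V$, so any product of generators pre- and post-multiplied by $v$ either vanishes or collapses to a scalar multiple of $v$. Thus $J=Fv$ is one-dimensional and commutative, giving $[J,J]=(0)$, which contradicts the previous sentence.

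The only real care needed is in the first step: one must cleanly separate what ``isolated in $\Gamma'$'' does and does not say, and then deploy saturation of $W$ to upgrade the sink/source conditions from $\Gamma'$ to $\Gamma$. Once $v$ is known to be genuinely isolated in $\Gamma$, the application of Lemma~\ref{le3} and the trivial computation of $id_{L(\Gamma)}(\{v\})$ close the argument immediately.
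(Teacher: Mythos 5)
Your proof is correct and rests on the same two graph-theoretic ingredients as the paper's (hereditarity of $W$ to kill the incoming edges at $v$, saturation of $W$ to handle the outgoing ones), but you close the argument by a different route, and one that is actually more self-contained. The paper simply asserts that ``$v$ can not be isolated in $\Gamma$'' and then uses saturation to force $v\in W$; you instead prove that $v$ \emph{is} isolated in $\Gamma$ (saturation rules out $v$ emitting any edge, since all its ranges would lie in $W$; the incoming edges are ruled out as in the paper) and then obtain the contradiction by applying Lemma~\ref{le3} to the hereditary saturated set $\{v\}$, whose ideal $id_{L(\Gamma)}(\{v\})=Fv$ is one-dimensional, hence has zero commutator. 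This supplies a justification for the step the paper leaves unexplained. One small point of care: when you say an edge $e$ with $r(e)=v$ ``automatically survives in $\Gamma'$,'' you also need $s(e)\in V\setminus W$, and that is precisely where hereditarity of $W$ enters (an edge from $W$ to $v\notin W$ would violate it); the paper invokes hereditarity explicitly at this step, while you leave it implicit in the well-definedness of $\Gamma'$.
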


\begin{proof}
Let a vertex $v\in V\setminus W$ be isolated in $\Gamma'.$ Then $E(V\setminus W,v)=\emptyset.$ Since $W$ is hereditary and $v\notin W$ we conclude that
$E(V,v)=\emptyset.$ Since $v$ can not be isolated in $\Gamma$ it can not be a sink, $E(v,V)\neq\emptyset.$ But $E(v,V\setminus W)=\emptyset,$ hence all
descendants of $v$ lie in $W.$ Since $W$ is saturated we conclude that $v\in W,$ a contradiction.
\end{proof}

\begin{lemma}\label{le6}
Every vertex $v\in V\setminus W$ is a balloon over $W.$
\end{lemma}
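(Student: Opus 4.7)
The plan is to verify the five conditions (i)--(v) of Definition~\ref{def3} in turn for an arbitrary $v\in V\setminus W$. The discussion preceding Lemma~\ref{le5}, combined with Lemma~\ref{le1} applied to $\Gamma'$, shows that $\Gamma'$ is a disjoint union of isolated vertices and loops, and Lemma~\ref{le5} eliminates the isolated vertex possibility. Consequently, each connected component of $\Gamma'$ is a single vertex carrying a single loop, so every $v\in V\setminus W$ has a unique edge $C=C_v\in E(v,v)$ in $\Gamma'$ and is incident to no other edge of $\Gamma'$. This yields (i) (by assumption) and (ii) (the loop $C$), and records $E(v,V\setminus W)=\{C\}$ and $E(V\setminus W,v)=\{C\}$ in $\Gamma$ itself.

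Next I would promote $E(V\setminus W,v)=\{C\}$ to condition (v) by eliminating edges from $W$ into $v$. If some $e\in E$ had $s(e)\in W$ and $r(e)=v$, then the hereditary property of $W$ would force $v\in W$, contradicting $v\in V\setminus W$. Hence $E(V,v)=\{C\}$, giving (v), and then the decomposition $E(v,V)=E(v,V\setminus W)\cup E(v,W)=\{C\}\cup E(v,W)$ is exactly (iv).

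For (iii), suppose for contradiction that $E(v,W)=\emptyset$. By the decomposition just displayed, $E(v,V)=\{C\}$, so the loop $C$ has no exit in $\Gamma$: the source of any exit edge would have to be $v$, but $v$ emits only $C$. This contradicts Lemma~\ref{le2}, which applies since $[L(\Gamma),L(\Gamma)]$ is assumed nonzero simple. Hence $E(v,W)\neq\emptyset$, completing the verification that $v$ is a balloon over $W$.

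None of these steps presents real resistance; the only subtleties are to invoke hereditariness (rather than saturation) in order to block edges from $W$ into $V\setminus W$, and to frame the final no-exit contradiction inside $\Gamma$ (not $\Gamma'$) so that Lemma~\ref{le2} is applicable.
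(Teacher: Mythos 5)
Your proof is correct and follows essentially the same route as the paper: both deduce from Lemma~\ref{le1} and Lemma~\ref{le5} that $\Gamma'$ is a disjoint union of loops, read off $E(v,V\setminus W)=E(V\setminus W,v)=\{C\}$, use hereditariness of $W$ to upgrade this to $E(V,v)=\{C\}$, and then rule out $E(v,W)=\emptyset$. The only (harmless) divergence is in that last step: the paper concludes that the loop would be isolated in $\Gamma$ (contradicting connectedness), whereas you invoke Lemma~\ref{le2} to say the loop would have no exit — your version has the small advantage of not leaning on an implicit connectivity assumption.
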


\begin{proof}
By what we have shown $\Gamma'$ is a disjoint union of loops \includegraphics[width=.05\textwidth]{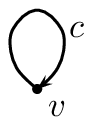}. It is easly to see that $E(V,v)=\{c\}$ and $E(v,V\setminus W)=\{c\}.$
If $E(v,W)=\emptyset$ then the loop  \includegraphics[width=.05\textwidth]{loopl} is isolated in $\Gamma.$ Hence $E(v,W)\neq\emptyset.$
Thus $v$ is a balloon over  $W.$
\end{proof}

Let $S_0$ be the span of all elements $pp^*,$ where $p$ is a path on $\Gamma$ including pathes of length zero(that is vertices).
Let $S_1$ be the span of all elements $pq^*,$ where $p,q$ are pathes on $\Gamma,$ $r(p)=r(q),$ $p\neq q.$ It follows from the description
of a Groebner - Shirshov basis of $L(\Gamma)$ [4] that $L(\Gamma)=S_0+S_1$ is a direct sum of vector spaces.
Let $M$ be the semigroup generated by $V\cup E\cup E^*.$ It is easily to see that (i) $M=(M\cap S_0)\cup (M\cap S_1),$
(ii) for arbitrary elements $a,b\in M$ if $0\neq ab\in S_i,$ then $ba\in S_i$ or $ba=0,$ for $i=0,1.$

\begin{lemma}\label{le7}
$[I,I]\cap S_0=span\{[p,p^*] | \text{ $p$ is a path on $\Gamma$ , $r(p)\in W$}\}.$
\end{lemma}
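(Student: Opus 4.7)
The $\supseteq$ inclusion is immediate: for a path $p$ with $r(p)\in W$ we have $p=p\cdot r(p)\in L(\Gamma)WL(\Gamma)=I$ and likewise $p^{*}\in I$, so $[p,p^{*}]\in [I,I]$; and $[p,p^{*}]=pp^{*}-p^{*}p=pp^{*}-r(p)$ lies in $S_{0}$, since $r(p)=r(p)\cdot r(p)^{*}$ is a path of length zero times its dual.

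For the reverse inclusion, the plan is to reduce to analyzing commutators of pairs of monomials lying in $M\cap I$. Any element of $I$ is a linear combination of normal monomials $pq^{*}$ with $r(p)=r(q)\in W$ (every generator $\alpha w\beta$ with $w\in W$ can be put in such a form using the defining relations). Hence an arbitrary $x\in [I,I]$ can be written as $\sum_{l}\gamma_{l}[m_{l},n_{l}]$ with all $m_{l},n_{l}\in M\cap I$. By property (ii) stated before the lemma, each bracket $[m_{l},n_{l}]$ lies entirely in $S_{0}$ or entirely in $S_{1}$. Since $x\in S_{0}$ and $L(\Gamma)=S_{0}\oplus S_{1}$ as vector spaces, $x$ equals the sum of only those brackets landing in $S_{0}$; thus it suffices to handle a single such bracket at a time.

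So take $m=pq^{*}$ and $n=rs^{*}$ in $M\cap I$ with $[m,n]\in S_{0}$. Nonvanishing of $q^{*}r$ forces one of $q,r$ to be a prefix of the other, and the condition $mn\in S_{0}$ then dictates the shape of $s$ (resp.\ $p$). A short case analysis in the two resulting configurations shows that whenever $mn$ (resp.\ $nm$) is nonzero and lies in $S_{0}$, it takes the form $tt^{*}$ for a path $t$ with $r(t)\in W$; moreover the configurations in which exactly one of $mn,nm$ vanishes are seen to be inconsistent. In every remaining case $[m,n]$ reduces to a difference $tt^{*}-uu^{*}$ with $r(t)=r(u)\in W$, and this equals $[t,t^{*}]-[u,u^{*}]$, which is in the claimed span.

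The main obstacle is the bookkeeping in this last step: one must verify that the four a priori possibilities for the pair $(mn,nm)$ collapse to the symmetric prefix cases described, and that in each case the resulting paths end at vertices of $W$. The latter is automatic from $W$ being hereditary, so any subpath of $r$ or $s$ emerging after canceling against $q$ or $p$ still ends in $W$; the former is a direct multiplication using the defining identities $e^{*}f=\delta_{e,f}r(e)$.
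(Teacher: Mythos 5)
Your proposal is correct and follows essentially the same route as the paper: reduce to commutators of monomials $pq^{*}$, $rs^{*}$ with ranges in $W$, use the $S_0\oplus S_1$ decomposition and property (ii) to isolate the brackets landing in $S_0$, and then the prefix analysis of $q^{*}r$ forces $[m,n]=(tu)(tu)^{*}-(vu)(vu)^{*}=[tu,(tu)^{*}]-[vu,(vu)^{*}]$ with $r(u)\in W$. The only differences are cosmetic: you make explicit the easy $\supseteq$ inclusion and the projection-to-$S_0$ step that the paper leaves implicit, while leaving the final multiplication bookkeeping as a sketch that the paper writes out in full.
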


\begin{proof}
The ideal $I$ is spanned by elements $pq^*;$ $p,q$ are paths, $r(p)=r(q)\in W.$ Consider two such elements $p_1q_1^*$ and $p_2q_2^*,$
$0\neq p_1q_1^*p_2q_2^*\in S_0.$ Since $q_1^*p_2\neq 0$ it follows that $p_2=q_1u$ or $q_1=p_2u,$ where $u$ is a path on $\Gamma.$
Consider the first case, $p_2=q_1u.$ Then $p_1q_1^*p_2q_2^*=p_1uq^*_2.$ Since this element lies in $S_0$ we conclude that $q_2=p_1u.$
Now, $p_2q_2^*p_1q_1^*=q_1uu^*p_1^*p_1q_1^*=(q_1u)(q_1u)^*$ and therefore $[p_1q_1^*, p_2q_2^*]=(p_1u)(p_1u)^*-(q_1u)(q_1u)^*=[p_1u,(p_1u)^*]-[q_1u,(q_1u)^*].$
Remember that $r(u)=r(q_2)\in W.$ Let $q_1=p_2u.$ Then $p_1q_1^*p_2q_2^*=p_1u^*p_2^*p_2q_2^*=p_1(q_2u)^*.$ Again $p_1q_1^*p_2q_2^*\in S_0$ implies $p_1=q_2u.$
Now, $p_2q_2^*p_1q_1^*=p_2q_2^*q_2uu^*p_2^*=(p_2u)(p_2u)^*.$ Therefore, $[p_1q_1^*,p_2q_2^*]=(q_2u)(q_2u)^*-(p_2u)(p_2u)^*=[q_2u,(q_2u)^*]-[p_2u,(p_2u)^*]$ and $r(u)=r(p_1)\in W.$
\end{proof}

Let $v\in V\setminus W,$ $E(v,W)=\{e_1,\ldots, e_n\},$ $r(e_i)=w_i$ for $1\leq i\leq n.$ Let $w=\sum_{i=1}^n w_i.$

\begin{lemma}\label{le8}
$w\in [L(W),L(W)].$
\end{lemma}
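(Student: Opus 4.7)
The plan is to establish $w\in[L(W),L(W)]$ in three stages: first show $w\in[L(\Gamma),L(\Gamma)]$; then use Lemma~\ref{le7} to represent $w$ as a sum of canonical commutators $[p_j,p_j^*]$; finally, project onto $L(W)$ using the vertex grading on $L(\Gamma)$ and use a multiplication trick centred on the balloons to show that the extra balloon-rooted terms also lie in $[L(W),L(W)]$.

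For the first stage, the Leavitt relation $v=CC^*+\sum_{i=1}^n e_ie_i^*$, together with $C^*C=v$ and $e_i^*e_i=w_i$, yields
$$w=[C^*,C]-\sum_{i=1}^n[e_i,e_i^*]\in[L(\Gamma),L(\Gamma)].$$
Since $[L(\Gamma),L(\Gamma)]=[I,I]$ (as in the proof of Lemma~\ref{le4}) and $w\in S_0$, Lemma~\ref{le7} gives a representation $w=\sum_j\alpha_j[p_j,p_j^*]$ in which every path $p_j$ has range in $W$.

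I would then classify the paths $p_j$ by their source. Those lying entirely in $W$ (type $A$) contribute commutators directly in $[L(W),L(W)]$. Every other $p_j$ (type $B$) has source at some balloon $v_j'\in V\setminus W$ and therefore factors as $p_j=C_{v_j'}^{k_j}e_{i_j}'p_j'$ with $e_{i_j}'\in E(v_j',W)$ and $p_j'$ a path in $W$. A short computation using $(C_{v_j'}^*)^{k_j}C_{v_j'}^{k_j}=v_j'$ and $(e_{i_j}')^*e_{i_j}'=w_{i_j}$ gives $p_j^*p_j=r(p_j')\in W$, while $p_jp_j^*\in v_j'L(\Gamma)v_j'$ is orthogonal to $L(W)$ under the vertex grading $L(\Gamma)=\bigoplus_{u,t\in V}uL(\Gamma)t$. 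Projecting the identity $w=\sum_j\alpha_j[p_j,p_j^*]$ onto $L(W)$ and onto its complement simultaneously yields
$$w=\sum_{j\in A}\alpha_j[p_j,p_j^*]-\sum_{j\in B}\alpha_jr(p_j'),\qquad \sum_{j\in B}\alpha_jp_jp_j^*=0,$$
and orthogonality of distinct balloon vertices further decouples the second identity into $\sum_{j\in B_{v'}}\alpha_jp_jp_j^*=0$ for each balloon $v'$.

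The main obstacle is to show that $\sum_{j\in B}\alpha_jr(p_j')$ itself lies in $[L(W),L(W)]$. Fixing a balloon $v'$, an edge $e_i'\in E(v',W)$ and an integer $k\ge 0$, I multiply the balloon identity on the left by $(e_i')^*(C_{v'}^*)^k$ and on the right by $C_{v'}^ke_i'$. The vanishings $(e_i')^*C_{v'}=0=C_{v'}^*e_i'$ together with $(C_{v'}^*)^kC_{v'}^k=v'$ kill every term except those with $k_j=k$ and $i_j=i$, leaving
$$\sum_{\substack{j\in B_{v'}\\ k_j=k,\,i_j=i}}\alpha_jp_j'(p_j')^*=0$$
inside $L(W)$. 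Rewriting $p_j'(p_j')^*=[p_j',(p_j')^*]+r(p_j')$ forces the corresponding sum of $\alpha_jr(p_j')$ into $[L(W),L(W)]$; summing over all $k$, $i$ and $v'$ yields $\sum_{j\in B}\alpha_jr(p_j')\in[L(W),L(W)]$, and combining with the first identity of the previous paragraph gives $w\in[L(W),L(W)]$.
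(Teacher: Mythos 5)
Your proof is correct and follows essentially the same route as the paper's: derive $w=[C^*,C]-\sum_{i}[e_i,e_i^*]\in[I,I]\cap S_0$, invoke Lemma~\ref{le7}, use linear independence of the canonical basis to kill the balloon-rooted $p_jp_j^*$ terms, and convert the leftover sums of vertices into commutators inside $L(W)$. Your version is in fact slightly more careful than the paper's, which writes the balloon-rooted paths as $ep_{e,i}$ with $e\in E(V\setminus W,W)$ and glosses over the loop-power prefixes that your factorization $p_j=C_{v_j'}^{k_j}e_{i_j}'p_j'$ handles explicitly.
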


\begin{proof}
Since $v$ is a balloon over $W,$ let $c$ be the loop from $E(v,v)$, we have $v=cc^*+\sum_{i=1}^n e_ie_i^*.$
Hence $c^*c-cc^*=v-(v-\sum_{i=1}^n e_ie_i^*)=\sum_{i=1}^n e_ie_i^*=\sum_{i=1}^n [e_i,e_i^*]+\sum_{i=1}^n e_i^*e_i=\sum_{i=1}^n [e_i,e_i^*]+w.$
Thus $w=c^*c-cc^*-\sum_{i=1}^n [e_i,e_i^*]=[c^*,c]-\sum_{i=1}^n [e_i,e_i^*]\in [L(\Gamma),L(\Gamma)]=[I,I].$ Hence $w\in [I,I]\cap S_0.$
By Lemma~\ref{le7} $w=\sum_{i} \alpha_i [p_i,p_i^*],\, \alpha_i\in F,\, r(p_i)\in W.$ We will distinguish between pathes that start with an edge from
$E(V\setminus W, W)$ and paths that lie entirely on $W,$  $w=\sum_{i} \alpha_{e,i} [ep_{e,i}, p_{e,i}^*e^*]+\sum \beta [q,q^*],$ where $e$ runs over $E(V\setminus W,W),$ $\alpha_{e,i}\in F,$
$p_{e,i}$ and $q$ are paths on $W.$ We have, $w= \sum_{i} \alpha_{e,i} (ep_{e,i} p_{e,i}^*e^*-r(p_{e,i}))+\sum \beta [q,q^*].$ Fix $e\in E(V\setminus W, W).$ From the description of the basis of
$L(\Gamma)$ in [4] it follows that $\sum_{i} \alpha_{e,i} ep_{e,i}p_{e,i}^*e^*=0$ and therefore $\sum_{i} \alpha_{e,i} p_{e,i}p_{e,i}^*=0.$
Now $\sum_{i} \alpha_{e,i} (ep_{e,i} p_{e,i}^*e^*-r(p_{e,i}))=\sum_{i} \alpha_{e,i} [p_{e,i},p_{e,i}^*]\in[L(W),L(W)].$ \newline Hence $w=\sum\alpha_{e,i} [p_{e,i},p_{e,i}^*]+ \sum \beta [q,q^*]\in [L(W),L(W)].$
\end{proof}

We proved Theorem~\ref{th2} in one direction.

\section{ Simplicity of the Lie algebra of Leavitt path algebra}

Let $\Gamma(V,E)$ be a graph. Suppose that $W\subsetneqq V$ is a simple subgraph, every vertex $v\in V\setminus W$ is a balloon over $W$ and
$\sum_{w\in r(E(v,W))}w $ lies in $[L(W),L(W)].$ We will show that the algebra $[L(\Gamma),L(\Gamma)]$ is simple. As above, denote $I=id_{L(\Gamma)}(W).$ The following
lemma was proved in [5].

\begin{lemma}\label{le9}
$I$ is a simple algebra.
\end{lemma}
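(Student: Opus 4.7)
The plan is to realize $L(W)$ as a locally full corner of $I$ and transfer simplicity from $L(W)$ to $I$.

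\emph{Step 1 (graded ideal structure).} I would first verify that $W$ is hereditary and saturated in $V$. Hereditariness follows from Definition~\ref{def3}(v): only its own loop enters each balloon, so no vertex of $W$ emits into $V\setminus W$, and therefore edges out of $W$ must stay in $W$. Saturation is vacuous on $V\setminus W$, since each balloon's loop $C$ has range equal to the balloon itself, not in $W$. Consequently $I$ is a graded ideal, and by the Gr\"obner--Shirshov normal form of [4] described after Lemma~\ref{le6}, $I$ is the $F$-span of monomials $pq^*$ with $r(p)=r(q)\in W$.

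\emph{Step 2 (corner structure and fullness).} For any $w,w'\in W$, hereditariness forces the paths underlying every $pq^*\in wIw'$ to lie in $W$, so $wIw'\subseteq L(W)$; summing over $W$ via local units recovers all of $L(W)$ inside $I$. On the other hand, every spanning monomial factors as $pq^*=p\cdot r(p)\cdot q^*\in I\cdot W\cdot I$, so $IWI=I$; that is, $W$ is a ``locally full'' set of orthogonal idempotents in $I$. By the standard correspondence for rings with a full set of local idempotents, the two-sided ideals of $I$ pass to two-sided ideals of the corner $L(W)$ via $J\mapsto J\cap L(W)$, so simplicity of $L(W)$ will translate into simplicity of $I$ provided that every nonzero ideal $J\subseteq I$ meets $L(W)$ nontrivially.

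\emph{Step 3 (extracting a nonzero element of $L(W)$).} Given $0\neq x\in J$, write $x=\sum_{i}\alpha_ip_iq_i^*$ in normal form and choose a term $p_1q_1^*$ with $p_1$ of maximal length. Since $r(p_1)\in W$ we have $p_1^*=p_1^*\cdot r(p_1)\in I$ and likewise $q_1\in I$, so $p_1^*xq_1\in J$; the chosen term collapses to $r(p_1)$ via $p_1^*p_1=r(p_1)=r(q_1)=q_1^*q_1$, while the Leavitt relations $e^*f=\delta_{e,f}r(e)$ annihilate or shorten the competing terms. This is the principal obstacle: when some $p_i$ begins with a power of the loop $c$ at a balloon $v$ and the loop exponents in $p_i$ differ from those in $p_1$, one multiplication by $p_1^*$ does not kill the offending monomials cleanly, so the reduction must be iterated --- multiplying further by edge stars $e_j^*(c^*)^\ell$ and vertex projections --- or one invokes the explicit extraction procedure of [5]. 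Once a nonzero element of $r(p_1)Ir(p_1)\subseteq L(W)$ is found in $J$, simplicity of $L(W)$ forces $W\subseteq J$, and the identity $IWI=I$ from Step 2 yields $J=I$, showing that $I$ is simple.
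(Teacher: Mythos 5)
The paper does not actually prove this lemma --- it is quoted from [5] without argument --- so there is no in-paper proof to measure yours against; your strategy (realize $L(W)$ as a locally full corner of $I$ and transfer simplicity) is the natural one and is presumably what [5] does. Your Steps 1 and 2 are correct: $W$ is hereditary because Definition~\ref{def3}(v) forbids any edge from $W$ into a balloon, and saturated because each balloon emits its own loop; hence $I$ is spanned by the $pq^*$ with $r(p)=r(q)\in W$, one gets $WIW=L(W)$ and $IWI=I$, and simplicity of $I$ reduces to showing that every nonzero ideal $J\triangleleft I$ meets $L(W)$ nontrivially.

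Step 3, however, is a genuine gap: you have correctly located the difficulty but not overcome it. The single multiplication $x\mapsto p_1^*xq_1$ can annihilate a nonzero $x$; when several normal-form terms differ only by powers of a balloon loop $c$, the surviving contributions are polynomial expressions in the cycle and can cancel. This is not a technicality to be ``iterated away'': it is precisely the obstruction that makes ``every nonzero ideal contains a vertex'' false for graphs having a cycle without exits, so some form of condition (L) must be invoked. In the present setting it does hold and should be said explicitly: cycles inside $W$ have exits because $L(W)$ is simple, and every balloon loop has an exit into $W$ by Definition~\ref{def3}(iii). A clean way to close the gap: $L(\Gamma)$ is semiprime, hence so is its ideal $I$, so $J^3\neq(0)$ and $K:=IJI$ is a nonzero ideal of $L(\Gamma)$ contained in $J$; since every cycle of $\Gamma$ has an exit, the Cuntz--Krieger uniqueness theorem gives a vertex $u\in K$; if $u\in W$ you are done, and if $u$ is a balloon then $e^*ue=r(e)\in W\cap J$ for any $e\in E(u,W)$. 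With that inserted your Steps 1--2 finish the proof; as written, the crucial extraction is only asserted (or delegated back to [5], which is all the paper itself does).
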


\begin{lemma}\label{le10}
Let $A$ be an arbitrary simple  algebra with two orthogonal idempotents $e_1,e_2.$ Then $A=[A,A]+e_iAe_i,\, i=1,2.$
\end{lemma}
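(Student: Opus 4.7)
The plan is to exploit simplicity of $A$ to write any element as a sum $\sum_k b_k e_i c_k$, and then to absorb each summand into $e_i A e_i$ modulo $[A,A]$, using the fact that in the abelianization $A/[A,A]$ one has $\overline{xy}=\overline{yx}$.

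First, since $e_i$ is a nonzero idempotent, the two-sided ideal $Ae_iA$ contains $e_i\cdot e_i\cdot e_i = e_i\neq 0$, so simplicity of $A$ forces $Ae_iA = A$. Hence every $a\in A$ admits a presentation $a = \sum_k b_k\, e_i\, c_k$ with $b_k,c_k\in A$. Using $e_i^2 = e_i$, each summand can be rewritten as
\[
b_k e_i c_k = (b_k e_i)(e_i c_k) = (e_i c_k)(b_k e_i) + \bigl[\,b_k e_i,\; e_i c_k\,\bigr],
\]
where $(e_i c_k)(b_k e_i) = e_i (c_k b_k) e_i \in e_i A e_i$ and the commutator lies in $[A,A]$. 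Summing over $k$ yields $a\in e_iAe_i + [A,A]$, which is the desired decomposition.

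The only input actually used is that $e_i$ is a nonzero idempotent, so the orthogonality of $e_1$ and $e_2$ plays no role in this particular lemma; it will presumably matter only when the conclusion is combined with the analogous statement for the complementary idempotent in a subsequent argument. The one delicate point is the potentially non-unital setting, but this causes no obstruction: the two-sided ideal generated by $e_i$ really is $Ae_iA$ precisely because $e_i = e_i^3$. Beyond that, I anticipate no serious obstacle --- the statement reduces to a one-line manipulation once simplicity is invoked.
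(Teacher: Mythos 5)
Your proof is correct and follows essentially the same route as the paper's: simplicity gives $A=Ae_iA$, and a commutator identity moves the outer factors inside. The paper does this in two stages (first landing in $e_1A$ via $ae_1b=[a,e_1b]+e_1ba$, then in $e_2Ae_2$), whereas your use of $e_i=e_i^2$ collapses it to one step and correctly exposes that orthogonality of $e_1,e_2$ is never actually needed --- a single nonzero idempotent suffices, which is consistent with how the lemma is invoked later (only one vertex idempotent $w$ is used in Lemma~\ref{le11}).
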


\begin{proof}
We have $A=Ae_1A.$ For arbitrary elements $a,b\in A,$ $ae_1b=[a, e_1b]+e_1ba.$ Similarly, $A=Ae_2A.$  For arbitrary elements $a,b\in A,$ we have
$e_1ae_2b=[e_1ae_2, e_2b]+e_2be_1ae_2.$ We proved that $A=[A,A]+e_2Ae_2.$ The equality  $A=[A,A]+e_1Ae_1$ is proved similarly.
\end{proof}

\begin{lemma}\label{le11}
$[L(\Gamma),L(\Gamma)]=[I,I]$ .
\end{lemma}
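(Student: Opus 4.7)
The inclusion $[I,I]\subseteq[L(\Gamma),L(\Gamma)]$ is immediate, so I focus on the reverse. My plan is to first reduce the problem modulo $I$, then decompose $L(\Gamma)=I\oplus K$ as vector spaces and analyze brackets piece by piece. The quotient $L(\Gamma)/I\cong L(\Gamma')$, where by Lemmas~\ref{le5}--\ref{le6} the graph $\Gamma'$ is a disjoint union of loops, so Lemma~\ref{le1} yields $[L(\Gamma'),L(\Gamma')]=0$ and hence $[L(\Gamma),L(\Gamma)]\subseteq I$. I would then take $K$ to be the $F$-span of the basis elements $\{v,\,C_v^{a},\,C_v^{*b}:v\text{ a balloon},\,a,b\ge 1\}$. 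For $x,y\in L(\Gamma)$ with $x=x_I+x_K$, $y=y_I+y_K$, the bracket $[x,y]$ splits into four pieces, and it will suffice to show that each of $[x_I,y_I]$, $[x_K,y_K]$, $[x_I,y_K]$, $[x_K,y_I]$ lies in $[I,I]$; the first is automatic.

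For $[K,K]$, generators based at distinct balloons annihilate one another, so only brackets $[C_v^a,C_v^{*b}]$ at a single balloon can be nonzero. Iterating the Cuntz--Krieger relation $C_vC_v^*=v-\sum_i e_ie_i^*$ gives
\[
[C_v^a,C_v^{*b}]=-\sum_{l=1}^{\min(a,b)}\sum_i(C_v^{a-l}e_i)(C_v^{b-l}e_i)^{*},
\]
and an adaptation of the computation in the proof of Lemma~\ref{le8} rewrites each inner sum as $\sum_i[C_v^{a-l}e_i,\,e_i^*C_v^{*(b-l)}]$ plus (only in the case $a=b$) a correction $w^{(v)}=\sum_i r(e_i)$, which lies in $[L(W),L(W)]\subseteq[I,I]$ by hypothesis; hence $[K,K]\subseteq[I,I]$.

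For the mixed pieces $[I,K]$ and $[K,I]$, I would carry out a basis-level case analysis. Given a pair $(pq^*,k)$ with $r(p)=r(q)\in W$ and $k$ a generator of $K$ at a balloon $v$, the bracket vanishes unless $s(p)$ or $s(q)$ equals $v$. When exactly one of them equals $v$, the bracket is a single basis element that factors as $a\cdot b$ with $a,b\in I$ and $ba=0$ (using $q^*C_v=0$ when $s(q)\ne v$), so it equals $[a,b]\in[I,I]$. The main obstacle is the ``type-(d)'' case $s(p)=s(q)=v$, where $[pq^*,C_v^a]$ becomes a difference $P_1-P_2$ of two basis elements $P_j=C_v^{A_j}e_{i'}\,p_0q_0^{*}\,e_i^{*}C_v^{*B_j}$ with $p_0,q_0$ paths in $W$. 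Each $P_j$ factors as $a_jb_jc_j$ with $a_j=C_v^{A_j}e_{i'}\in I$, $b_j=p_0q_0^{*}\in I$, $c_j=e_i^{*}C_v^{*B_j}\in I$, and the reversed product $c_ja_jb_j$ vanishes unless $A_j=B_j$ and $i=i'$, in which case $c_ja_jb_j=p_0q_0^{*}$. The key observation is that the matching condition ``$A_j=B_j$ and $i=i'$'' holds for $j=1$ if and only if it holds for $j=2$, so the two residues $p_0q_0^{*}$ cancel in $P_1-P_2$, giving $[pq^*,C_v^a]=[a_1b_1,c_1]-[a_2b_2,c_2]\in[I,I]$. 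Symmetric arguments handle $[pq^*,C_v^{*b}]$ and $[pq^*,v]$, completing the proof.
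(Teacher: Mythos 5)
Your proof is correct, and its skeleton coincides with the paper's: the same decomposition $L(\Gamma)=I+K$ with $K$ spanned by the balloon vertices and the powers $C_v^{a},C_v^{*b}$, and the same mechanism for $[K,K]$ — peeling off the Cuntz--Krieger relation so that the only surviving non-commutator term is $\sum_i r(e_i)$, appearing precisely when $a=b$ and absorbed by the hypothesis $\sum_{w\in r(E(v,W))}w\in[L(W),L(W)]$. Your closed-form formula is just the unrolled version of the paper's induction on $\min(m,n)$. Where you genuinely diverge is in the mixed brackets $[I,K]$: the paper dispatches these in two lines by citing Lemma~\ref{le9} (simplicity of $I$) and Lemma~\ref{le10} to write $I=[I,I]+wIw$ for a vertex $w\in W$, observing that $c_v^{n}$ and $(c_v^{*})^{n}$ annihilate $wIw$, so $[c_v^{n},I]=[c_v^{n},[I,I]]\subseteq[I,I]$ by the Jacobi identity. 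Your replacement — a basis-level case analysis in which each surviving monomial factors through elements of $I$ whose reversed products either vanish or cancel between $P_1$ and $P_2$ (the key point being that $A_1=B_1$ iff $A_2=B_2$, which you correctly identify and which does hold since $A_2-A_1=B_2-B_1=a$) — is longer but entirely elementary, and notably makes no use of the simplicity of $I$ for this step. Both routes are valid; the paper's is shorter, yours is more self-contained and would survive in settings where Lemma~\ref{le10} is unavailable.
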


\begin{proof}
We have $L(\Gamma)=I+span\{c_v^n |n\geq 0, v\in V\setminus W\}+span\{(c_v^*)^n |n\geq1,v\in V\setminus W\}.$
Let $w\in W.$ Then, by Lemma \ref{le10}, $I=[I,I]+wIw.$
\newline Hence $[c_v^n,I]=[c_v^n,[I,I]+wIw]=[c_v^n,[I,I]]\subseteq [I,I].$
Similarly, $[(c_v^*)^n,I]\subseteq [I,I].$ It remains to show that $[c_v^n,(c_v^*)^m]\in [I,I].$ Let $c=c_v.$
Suppose at first that $m>n.$ Then
\begin{alignat*}{1}
[c^n,(c^*)^m]&=c^n(c^*)^m-(c^*)^{m-n}\\
&=c^{n-1}(cc^*)(c^*)^{m-1}-(c^*)^{m-n}\\
&=c^{n-1}(v-\sum e_ie_i^*)(c^*)^{m-1}-(c^*)^{m-n}\\
&=(c^{n-1}(c^*)^{m-1}-(c^*)^{m-n})-c^{n-1}\sum e_ie_i^*(c^*)^{m-1}.
\end{alignat*}
The first summand $c^{n-1}(c^*)^{m-1}-(c^*)^{m-n}=[c^{n-1},(c^*)^{m-1}]$ and we can apply the induction assumption. Furthermore, \newline$c^{n-1}e_ie_i^*(c^*)^{m-1}=[c^{n-1}e_i,e_i^*(c^*)^{m-1}]+e_i^*(c^*)^{m-1}c^{n-1}e_i=[c^{n-1}e_i, e_i^*(c^*)^{m-1}],$ since $e_i^*(c^*)^{m-1}c^{n-1}e_i=e_i^*(c^*)^{m-n}e_i=0.$
Now, let $n>m.$ Then
\begin{alignat*}{1}
[c^n,(c^*)^m]&=c^n(c^*)^m-c^{n-m}\\
&=c^{n-1}(v-\sum e_ie_i^*)(c^*)^{m-1}-c^{n-m}\\
&=[c^{n-1},(c^*)^{m-1}]-c^{n-1}\sum e_ie^*_i(c^*)^{m-1}.
\end{alignat*}
As above,
\begin{alignat*}{1}
c^{n-1}e_ie_i^*(c^*)^{m-1}&=[c^{n-1}e_i,e_i^*(c^*)^{m-1}]+e_i^*(c^*)^{m-1}c^{n-1}e_i\\
&=[c^{n-1}e_i,e_i^*(c^*)^{m-1}]+e_i^*c^{n-m}e_i=[c^{n-1}e_i,e_i^*(c^*)^{m-1}].
\end{alignat*}
Finally, let $n=m.$ As above we conclude that
\begin{alignat*}{1}
[c^n,(c^*)^n]&=[c^{n-1},(c^*)^{n-1}]-c^{n-1}\sum e_ie_i^*(c^*)^{n-1},\\
\sum c^{n-1}e_ie_i^*(c^*)^{n-1}&=\sum [c^{n-1}e_i, e_i^*(c^*)^{n-1}]+\sum e_i^*e_i\in [I,I] \text{ by our assumption.}
\end{alignat*}
 Lemma is proved.
\end{proof}

\begin{lemma}\label{le12}
The algebra $[I,I]$ has zero center.
\end{lemma}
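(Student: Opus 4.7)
The plan is to show that any $z\in Z([I,I])$ must vanish in two stages: first establish that $z$ lies in the associative center $Z(I)$, then show $Z(I)\cap [I,I]=0$.

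For the first stage, I note that for every $a\in I$ the single commutator $[z,a]$ itself lies in $[I,I]$, so centrality of $z$ in $[I,I]$ yields $[z,[z,a]]=0$; hence $(\mathrm{ad}_z)^2\equiv 0$ on $I$. Since $\mathrm{ad}_z$ is a derivation of the associative algebra $I$, applying it twice to $ab$ and to $acb$ and using the Leibniz rule produces (assuming $\operatorname{char}F\neq 2$) the identities $[z,a][z,b]=0$ and $[z,a]\,c\,[z,b]=0$ for all $a,b,c\in I$. Writing $M=[z,I]$, these read $M\cdot M=0$ and $M\cdot I\cdot M=0$. Let $J=M+IM+MI+IMI$ be the two-sided ideal of $I$ generated by $M$; a routine case check on the nine products $X\cdot Y$ with $X,Y\in\{M,IM,MI,IMI\}$ shows that each is contained in $MIM$, whence $J^2=0$. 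Since $I$ is simple by Lemma~\ref{le9} and $I^2=I\neq 0$, it admits no nonzero nilpotent ideal, forcing $J=0$ and hence $M=[z,I]=0$; that is, $z\in Z(I)$.

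The main obstacle is the second stage $Z(I)\cap[I,I]=0$. My plan is to prove that $I$ admits no multiplicative identity, and then apply the standard fact that a simple associative algebra $R$ with $R^2=R$ and no identity satisfies $Z(R)=0$: for $0\neq z\in Z(R)$, centrality and simplicity make $a\mapsto az$ a bijection of $R$ whose preimage of $z$ is readily checked to be a two-sided identity. To see that $I$ is non-unital, pick any balloon vertex $v\in V\setminus W$ with loop $c$ and an edge $e\colon v\to W$; the elements $(c^ke)(c^ke)^*\in I$ for $k\geq 0$ form an infinite family of pairwise orthogonal idempotents, and any hypothetical $1_I\in I$, being a finite linear combination of basis monomials $pq^*$, cannot satisfy $1_I\cdot c^ke=c^ke$ simultaneously for arbitrarily large $k$.
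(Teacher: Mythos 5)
Your argument is essentially correct where it applies, and it takes a genuinely different route from the paper's. For your first stage the paper simply quotes Herstein's theorem that in a simple algebra of dimension greater than $4$ over its center $[A,A]$ generates $A$, so anything centralizing $[I,I]$ centralizes $I$; your replacement --- $(\mathrm{ad}_z)^2=0$ on $I$ forces $[z,I]$ to generate a square-zero ideal, which must vanish by the simplicity of $I$ (Lemma~\ref{le9}) --- is more elementary and self-contained. For the second stage the paper does not use non-unitality at all: it writes a general central element in the normal form $z=a_0+\sum_e ea_ee^*$, invokes the description of the center of a Leavitt path algebra from [2] to get $a_0=\alpha\sum_{w\in W}w$, and then kills $\alpha$ by the computation $zc_if=0\neq c_ifz$. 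Your alternative --- $I$ is a non-unital simple algebra, hence has zero center --- is cleaner and avoids the citation of [2]; the bounded-length argument showing that no finite sum $\sum\alpha_ip_iq_i^*$ with $r(q_i)\in W$ can be a left identity on all $c^ke$ is the step that actually does the work there (the infinite family of orthogonal idempotents alone would not suffice, since unital algebras can contain such families).

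The one genuine gap is characteristic $2$. Your first stage extracts $[z,a][z,b]=0$ from $(\mathrm{ad}_z)^2(ab)=2[z,a][z,b]=0$, which is vacuous when $\operatorname{char}F=2$, and neither the lemma nor the paper carries any characteristic hypothesis. As written, your proof establishes the statement only for $\operatorname{char}F\neq 2$. To cover characteristic $2$ you would need either Herstein's machinery as in the paper's proof, or a direct computation: since $z\in[I,I]\subseteq I$, write $z$ in the normal form above and test it against specific elements of $[I,I]$ (the commutators $[w,x]$ for $w\in W$, $[ee^*,x]$ for $e\in E(v,W)$, and the elements produced in Lemma~\ref{le11}) to force $z=0$ without passing through $Z(I)$ first.
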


\begin{proof}
I. Herstein [7] proved that in a simple associative algebra $A$ of dimension bigger than $4$ over its center, $[A,A]$ generates $A.$ Hence an elements from $I,$
that commutes with $[I,I],$ lies in the center of $I.$ An arbitrary element from $I$ looks as $z=a_0+\sum_{e\in E(v_i,W)}ea_e+ \sum_{e\in E(v_i,W)} b_e^*e^*+\sum_{e\in E(v_i,W)\atop f\in E(v_j,W)}ea_{e,f} f^*,$ $a_0,a_e,b_e, a_{e,f}\in L(W).$ Suppose that $z$ lies in the center of $I.$ Commuting $z$ with idempotents $w\in W,\, ee^*, e\in E(v_i,W)$ we see that $z=a_0+\sum_{e\in E(v_i,W)} ea_e e^*.$ This implies that $a_0$ lies in the center of $W.$ Therefore by [2], $|W|<\infty$ and $a_0=\alpha\sum_{w\in W} w, \, \alpha\in F.$ Multiplying $z$ on the left by $e^*$ and on the right by $e,\, e\in E(v_i,W),$ we get
$r(e)z=a_e=\alpha r(e).$ We proved that $z=\alpha(\sum_{w\in W} w+\sum_{e\in E(v_i,W)} ee^*).$ Now choose a vertex $v_i\in V\setminus W$ and an edge $f\in E(v_i, W).$ We have $zc_if=0,$ whereas $c_ifz=\alpha c_i f.$ Hence $\alpha=0.$ Lemma is proved.

\end{proof}

Now it remans to refer to Herstein's theorem about simplicity of $[I,I]/center,$ see [8]. Hence $[I,I]$ is simple and therefore  $[L(\Gamma),L(\Gamma)]$ is simple.

\section*{Acknowledgement}

The authors would like to thank professor Efim Zelmanov for his constant advise and valuable help during the preparation of this work.
The authors would also like to express their appreciation to professor  S. K. Jain for carefully reading the manuscript and for offering his comments.
This paper was funded by King Abdulaziz University, under grant No. (7-130/1433 HiCi). The authors, therefore, acknowledge technical and financial support of KAU.

\bibliographystyle{amsplain}

\begin{thebibliography}{10}

\bibitem {AA} G. Abrams, G. Aranda Pino,  \textit{The Leavitt path algebra of a graph, } J. Algebra 293 (2005), 319-334

\bibitem {AC} G. Aranda Pino, K. Crow  \textit{The center of a Leavitt path algebra, } Rev. Mat. Iberoamericana Volume 27, Number 2 (2011), 621-644

\bibitem {AM} G. Abrams, Z. Mesyan, \textit{Simple Lie algebra arising from Leavitt path algebra, } Journal of pure and applied algebra, 216(2012), 2303-2313.

\bibitem {AAJZ} A. Alahmadi, H. Alsulami, S.K. Jain, E. Zelmanov, \textit{Leavitt path algebras of finite Gelfand {$–$}Kirillov dimension, } Journal of Algebra and Its Applications, 171(2012)

\bibitem{AlAl} A. Alahmadi, H. Alsulami, \textit{Simplicity of  Lie algebra of skew elements of  Leavitt path algebra,} submitted

\bibitem{C} P. Colak, \textit{Two-sided ideals in Leavitt path algebras,} Journal of Algebra and Its Applications 10-5 (2011)

\bibitem{H} I.N.Herstein, \textit{Topics in Ring Theory, } Mathematics Lecture Notes, University of Chicago,1965

\bibitem{H2} I.N.Herstein, \textit{Rings with Involution, } Mathematics Lecture Notes, University of Chicago,1976.

\bibitem{MT} Mark Tomforde, \textit{ Uniqueness theorems and ideal structure for Leavitt path algebras, } Journal of Algebra 318 (2007), 270–299
\end{thebibliography}

\end{document}